\newtheorem{theorem}{Theorem}[section]
\theoremstyle{plain}
\newtheorem{definition}{Definition}[section]
\newtheorem{example}{Example}[section]
\newtheorem{lemma}{Lemma}[section]
\numberwithin{equation}{section}
\begin{document}
\title[Stability and convergence in convex $A$-metric spaces]{On the
stability and convergence of Mann iteration process in convex $A$-metric
spaces}
\dedicatory{{\footnotesize (Dedicated to Assoc. Prof. Dr. Birol Gunduz who
passed away on the 3rd of April, 2019.)}}
\author{Isa Yildirim}
\address{Department of Mathematics, Faculty of Science, Ataturk University,
Erzurum, 25240, Turkey.}
\email{isayildirim@atauni.edu.tr}
\subjclass[2000]{Primary 47H09 ; Secondary 47H10}
\keywords{Convex structure, Convex $A$-metric space, Mann iteration process,
Stability}

\begin{abstract}
In this paper, firstly, we introduce the concept of convexity in $A$-metric
spaces and show that Mann iteration process converges to the unique fixed
point of Zamfirescu type contractions in this newly defined convex $A$%
-metric space. Secondly, we define the concept of stability in convex $A$%
-metric spaces and establish stability result for the Mann iteration process
considered in such spaces. Our results carry some well-known results from
the literature to convex $A$-metric spaces.
\end{abstract}

\maketitle

\section{\textbf{Introduction and preliminaries}}

The Banach Fixed Point Theorem which is the one of the most important
theorem in all analysis. It plays a key role for many applications in
nonlinear analysis. For example, in the areas such as optimization,
mathematical models, and economic theories. Due to this, the result has been
generalized in various directions. As a generalization of metric space,
Mustafa and Sims introduced a new class of generalized metric spaces called $%
G$-metric spaces (see \cite{z12}, \cite{z13}) as a generalization of metric
spaces $(X,d).$ This was done to introduce and develop a new fixed point
theory for a variety of mappings in this new setting. This helped to extend
some known metric space results to this more general setting. The $G$-metric
space is defined as follows:

\begin{definition}
\cite{z13} Let $X$ be a nonempty set and let $G:X\times X\times X\rightarrow 
\mathbb{R}
^{+}$ be a function satisfying the following properties:

(i) $G(x,y,z)=0$ if $x=y=z$

(ii) $0<G(x,x,y)$ for all $x,y\in X,$ with $x\neq y$

(iii) $G(x,x,y)\leq G(x,y,z)$ for all $x,y,z\in X,$ with $z\neq y$

(iv) $G(x,y,z)=G(x,z,y)=G(y,z,x)=...,$ (symmetry in all three variables); and

(v) $G(x,y,z)\leq G(x,a,a)+G(a,y,z)$ for all $x,y,z,a\in X$ (rectangle
inequality )$.$

Then the function $G$ is called a generalized metric or more specifically, a 
$G$-metric on $X$, and the pair $(X,G)$ is called a $G$-metric space.
\end{definition}

Mustafa et al. studied many fixed point results for a self-mapping in $G$%
-metric space. \cite{z3}-\cite{z1} can be cited for reference.

On the other hand, Abbas et al. \cite{a} introduced the concept of an $A$%
-metric space as follows:

\begin{definition}
Let $X$ be nonempty set. Suppose a mapping $A:X^{t}\rightarrow 
\mathbb{R}
$ satisfy the following conditions:

$\left( A_{1}\right) $ $A(x_{1},x_{2},...,x_{t-1},x_{t})\geq 0$ $,$

$\left( A_{2}\right) $ $A(x_{1},x_{2},...,x_{t-1},x_{t})=0$ if and only if $%
x_{1}=x_{2}=...=x_{t-1}=x_{t},$

$\left( A_{3}\right) $ $A(x_{1},x_{2},...,x_{t-1},x_{t})\leq
A(x_{1},x_{1},...,(x_{1})_{t-1},y)+A(x_{2},x_{2},...,(x_{2})_{t-1},y)+$

$%
...+A(x_{t-1},x_{t-1},...,(x_{t-1})_{t-1},y)+A(x_{t},x_{t},...,(x_{t})_{t-1},y) 
$

for any $x_{i},y\in X,$ $(i=1,2,...,t).$ Then, $(X,A)$ is said to be an $A$%
-metric space.
\end{definition}

It is clear that the an $A$-metric space for $t=2$ reduces to ordinary
metric $d$. Also, an $A$-metric space is a generalization of the $G$-metric
space.

\begin{example}
\cite{a} Let $X=%
\mathbb{R}
$. Define a function $A:X^{t}\rightarrow 
\mathbb{R}
$ by%
\begin{eqnarray*}
A(x_{1},x_{2},...,x_{t-1},x_{t}) &=&\left\vert x_{1}-x_{2}\right\vert
+\left\vert x_{1}-x_{3}\right\vert +...+\left\vert x_{1}-x_{t}\right\vert \\
&&+\left\vert x_{2}-x_{3}\right\vert +\left\vert x_{2}-x_{4}\right\vert
+...+\left\vert x_{2}-x_{t}\right\vert \\
&&\vdots \\
&&+\left\vert x_{t-2}-x_{t-1}\right\vert +\left\vert
x_{t-2}-x_{t}\right\vert +\left\vert x_{t-1}-x_{t}\right\vert \\
&=&\sum_{i=1}^{t}\sum_{i<j}\left\vert x_{i}-x_{j}\right\vert \text{.}
\end{eqnarray*}

Then $(X,A)$ is an $A$-metric space..
\end{example}

\begin{lemma}
\label{l2} \cite{a} Let $\left( X,A\right) $ be $A$-metric space. Then $%
A\left( x,x,\dots ,x,y\right) =A\left( y,y,\dots ,y,x\right) $ for all $%
x,y\in X$.
\end{lemma}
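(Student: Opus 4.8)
The plan is to apply the rectangle inequality $(A_{3})$ twice, each time choosing the free auxiliary point so that all but one of the $t$ summands on the right-hand side collapse to zero by $(A_{2})$. The key observation is that in $(A_{3})$ the auxiliary point (the variable called $y$ in that axiom) can be chosen independently of the entries of the tuple, and setting it equal to a repeated entry annihilates the corresponding summands.

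First I would instantiate $(A_{3})$ at the tuple $(x,x,\dots,x,y)$, whose first $t-1$ entries are $x$ and whose last entry is $y$, taking the auxiliary point to be $x$ itself. For each index $i=1,\dots,t-1$ the corresponding summand is $A(x,x,\dots,x,x)$, which equals $0$ by $(A_{2})$, while the final summand is $A(y,y,\dots,y,x)$. This gives
\[
A(x,x,\dots,x,y)\le A(y,y,\dots,y,x).
\]

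Next I would repeat the same step with the roles of $x$ and $y$ interchanged: apply $(A_{3})$ to the tuple $(y,y,\dots,y,x)$ with auxiliary point $y$, so that the first $t-1$ summands again vanish by $(A_{2})$ and only $A(x,x,\dots,x,y)$ survives, yielding
\[
A(y,y,\dots,y,x)\le A(x,x,\dots,x,y).
\]
Combining the two displayed inequalities produces the desired equality.

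I do not expect a genuine obstacle here; the argument is short and uses only $(A_{2})$ and $(A_{3})$ (the nonnegativity axiom $(A_{1})$ is not needed). The single point that requires care is the bookkeeping inside $(A_{3})$: one must recognize that choosing the auxiliary point to coincide with the repeated entry is precisely what forces $t-1$ of the $t$ terms to collapse, and this collapse is in effect the whole content of the lemma.
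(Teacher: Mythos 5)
Your proof is correct. The paper itself gives no proof of this lemma---it is quoted from the reference of Abbas et al.\ \cite{a}---and your argument (instantiating the rectangle inequality $(A_{3})$ at the tuple $(x,x,\dots,x,y)$ with auxiliary point $x$ so that the first $t-1$ summands vanish by $(A_{2})$, then swapping the roles of $x$ and $y$ to get the reverse inequality) is precisely the standard argument used in that source.
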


\begin{lemma}
\label{l3} \cite{a} Let $\left( X,A\right) $ be $A$-metric space. Then for
all for all $x,y\in X$ we have $A\left( x,x,\dots ,x,z\right) \leq \left(
t-1\right) A\left( x,x,\dots ,x,y\right) +A\left( z,z,\dots ,z,y\right) $
and $A\left( x,x,\dots ,x,z\right) \leq \left( t-1\right) A\left( x,x,\dots
,x,y\right) +A\left( y,y,\dots ,y,z\right) $.
\end{lemma}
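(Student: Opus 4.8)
The plan is to obtain both inequalities essentially as direct consequences of the rectangle inequality $\left( A_{3}\right)$, with the symmetry Lemma \ref{l2} serving only to bridge the two forms. There is no genuinely hard step here; the entire argument is a careful specialization of $\left( A_{3}\right)$ followed by a substitution.

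First I would establish the first inequality by specializing $\left( A_{3}\right)$. In the expression $A(x,x,\dots ,x,z)$ the first $t-1$ entries equal $x$ and the last entry is $z$; that is, I set $x_{1}=\dots =x_{t-1}=x$ and $x_{t}=z$. Applying $\left( A_{3}\right)$ with the auxiliary point $y$ then yields
\[
A(x,x,\dots ,x,z)\leq \sum_{i=1}^{t-1}A(x,x,\dots ,x,y)+A(z,z,\dots ,z,y).
\]
The first $t-1$ summands are identical and collapse to $(t-1)A(x,x,\dots ,x,y)$, which is exactly the first claimed bound. The only thing to watch is the bookkeeping: one must recognize that repeating $x$ in the first $t-1$ slots produces precisely $t-1$ copies of the same term, not $t$ copies, with the final summand coming from the single entry $z$.

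For the second inequality I would invoke Lemma \ref{l2}, which asserts $A(z,z,\dots ,z,y)=A(y,y,\dots ,y,z)$. Substituting this equality into the right-hand side of the inequality just obtained immediately converts it into
\[
A(x,x,\dots ,x,z)\leq (t-1)A(x,x,\dots ,x,y)+A(y,y,\dots ,y,z),
\]
which is the second assertion, completing the proof.

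I do not expect any substantive obstacle: the content lies entirely in reading off the terms produced by $\left( A_{3}\right)$ correctly and in citing the symmetry lemma. The step most prone to error is the index count in the first display — verifying that exactly $t-1$ (and not $t$) copies of $A(x,\dots ,x,y)$ appear — since an off-by-one there would spoil the stated constant $t-1$.
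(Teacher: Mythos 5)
Your proof is correct: specializing $\left( A_{3}\right)$ with $x_{1}=\dots =x_{t-1}=x$, $x_{t}=z$ and auxiliary point $y$ gives exactly $t-1$ copies of $A(x,x,\dots ,x,y)$ plus $A(z,z,\dots ,z,y)$, and Lemma \ref{l2} then converts the last term to yield the second inequality. The paper itself states this lemma with a citation to \cite{a} rather than proving it, but your argument is precisely the standard one from that source, so there is nothing further to compare.
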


\begin{definition}
\cite{a} Let $\left( X,A\right) $ be $A$-metric space.

(i) A sequence $\left\{ x_{n}\right\} $ in $X$ is said to converge to a
point $u\in X$ if $A\left( x_{n},x_{n},\dots ,x_{n},u\right) \rightarrow 0$
as $n\rightarrow \infty $.

(ii) A sequence $\left\{ x_{n}\right\} $ in $X$ is called a Cauchy sequence
if $A\left( x_{n},x_{n},\dots ,x_{n},u_{m}\right) \rightarrow 0$ as $%
n,m\rightarrow \infty $.

(iii) The $A$-metric space $\left( X,A\right) $ is said to be complete if
every Cauchy sequence in $X\ $is convergent.
\end{definition}

Recently, Yildirim \cite{i} introduced the notion of Zamfirescu mappings in $%
A$-metric space as follows:

\begin{definition}
\label{d1} Let $\left( X,A\right) $ be $A$-metric space and $f:X\rightarrow
X $ be a mapping. $f$ is called a $A$-Zamfirescu mapping ($AZ$ mapping), if
and only if, there are real numbers, $0\leq a<1$, $0\leq b,c<\frac{1}{t}$
such that for all $x,y\in X$, at least one of the next conditions is true: 
\begin{equation*}
\left( AZ_{1}\right) A(fx,fx,\dots ,fx,fy)\leq aA(x,x,\dots ,x,y)
\end{equation*}%
\begin{equation*}
\left( AZ_{2}\right) A(fx,fx,\dots ,fx,fy)\leq b\left[ A(fx,fx,\dots
,fx,x)+A(fy,fy,\dots ,fy,y)\right]
\end{equation*}%
\begin{equation*}
\left( AZ_{3}\right) A(fx,fx,\dots ,fx,fy)\leq c\left[ A(fx,fx,\dots
,fx,y)+A(fy,fy,\dots ,fy,x)\right]
\end{equation*}
\end{definition}

Yildirim \cite{i} also extended the Zamfirescu results \cite{16} to $A$%
-metric spaces and he obtained the following results on fixed point theorems
for such mappings.

\begin{lemma}
\label{l1} \cite{i} Let $\left( X,A\right) $ be $A$-metric space and $%
f:X\rightarrow X$ be a mapping. If $f$ is a $AZ$ mapping, then there is $%
0\leq \delta <1$ such that 
\begin{equation}
A\left( fx,fx,\dots ,fx,fy\right) \leq \delta A\left( x,x,\dots ,x,y\right)
+t\delta A(fx,fx,\dots ,fx,x)  \label{1}
\end{equation}%
and 
\begin{equation}
A\left( fx,fx,\dots ,fx,fy\right) \leq \delta A\left( x,x,\dots ,x,y\right)
+t\delta A(fy,fy,\dots ,fy,x)  \label{2}
\end{equation}%
for all $x,y\in X$.
\end{lemma}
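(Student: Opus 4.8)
The plan is to take $\delta := \max\!\left\{a,\ \tfrac{b}{1-b(t-1)},\ \tfrac{c}{1-c(t-1)}\right\}$ and to show that this single constant serves for both \eqref{1} and \eqref{2}. First I would confirm that $\delta$ is well defined and lies in $[0,1)$: since $0\le b,c<\tfrac1t$ we have $b(t-1)<\tfrac{t-1}{t}<1$ and $c(t-1)<1$, so both denominators are positive, and $\tfrac{b}{1-b(t-1)}<1\iff b<1-b(t-1)\iff bt<1$, with the analogous equivalence for $c$. Hence each of the three listed numbers is strictly below $1$ and $0\le\delta<1$. It is worth noting that this is exactly where the hypothesis $b,c<\tfrac1t$ (in place of the classical Zamfirescu threshold $\tfrac12$) is forced upon us: it is precisely the bound that keeps $\delta<1$ after the factor $(t-1)$ coming from the $A$-metric triangle inequality has been absorbed.

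Next I would fix $x,y\in X$. By Definition \ref{d1} at least one of $(AZ_1)$, $(AZ_2)$, $(AZ_3)$ holds for this pair, and I would dispose of the three cases separately, reducing each to the target form. Case $(AZ_1)$ is immediate, for $A(fx,\dots,fx,fy)\le aA(x,\dots,x,y)$ already dominates the right-hand sides of both \eqref{1} and \eqref{2} once we recall $A\ge0$ and $a\le\delta$. The real content lies in $(AZ_2)$ and $(AZ_3)$.

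For $(AZ_2)$ aimed at \eqref{1}, I would start from $A(fx,\dots,fx,fy)\le b\!\left[A(fx,\dots,fx,x)+A(fy,\dots,fy,y)\right]$ and estimate $A(fy,\dots,fy,y)$ by applying Lemma \ref{l3} twice, routing first through the vertex $fx$ and then through $x$, so that $A(fy,\dots,fy,y)\le(t-1)A(fy,\dots,fy,fx)+(t-1)A(fx,\dots,fx,x)+A(x,\dots,x,y)$. Replacing $A(fy,\dots,fy,fx)$ by $A(fx,\dots,fx,fy)$ via Lemma \ref{l2} and collecting like terms reduces the inequality to $\left[1-b(t-1)\right]A(fx,\dots,fx,fy)\le b\,A(x,\dots,x,y)+bt\,A(fx,\dots,fx,x)$; dividing by the positive number $1-b(t-1)$ produces exactly the coefficients $\tfrac{b}{1-b(t-1)}$ and $t\cdot\tfrac{b}{1-b(t-1)}\le t\delta$, which is \eqref{1}. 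The case $(AZ_3)$ for \eqref{1} runs along the same lines: I would bound $A(fx,\dots,fx,y)$ through the vertex $x$ and $A(fy,\dots,fy,x)$ through the vertex $fx$, so that the cross terms again assemble into a single multiple $(t-1)A(fx,\dots,fx,fy)$ on the right, which is then moved to the left and divided out.

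Inequality \eqref{2} would be proved by the identical mechanism, except that I would choose the intermediate vertices in the applications of Lemma \ref{l3} so that the surviving remainder is $A(fy,\dots,fy,x)$ instead of $A(fx,\dots,fx,x)$; concretely, in $(AZ_2)$ one expands $A(fx,\dots,fx,x)$ through the vertex $fy$ and $A(fy,\dots,fy,y)$ through the vertex $x$. I expect the only real obstacle to be organizational rather than conceptual: because the factor $(t-1)$ multiplies the very quantity $A(fx,\dots,fx,fy)$ that we are trying to bound, one must check $1-b(t-1)>0$ and $1-c(t-1)>0$ before dividing, and one must pick the triangulation vertices carefully so that the leftover terms collapse into exactly $t\,A(fx,\dots,fx,x)$ for \eqref{1} and $t\,A(fy,\dots,fy,x)$ for \eqref{2}, with no stray terms remaining.
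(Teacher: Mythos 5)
Your proof is correct and is essentially the canonical argument here: the paper itself states Lemma \ref{l1} without proof (it is imported from \cite{i}), and the proof in that source is exactly this Zamfirescu--Berinde-type computation --- case analysis on $(AZ_1)$--$(AZ_3)$, the rectangle inequality of Lemma \ref{l3} combined with the symmetry of Lemma \ref{l2} to collect a $(t-1)A(fx,\dots,fx,fy)$ term on the right and absorb it into the left side, and the resulting constant $\delta=\max\left\{a,\tfrac{b}{1-(t-1)b},\tfrac{c}{1-(t-1)c}\right\}<1$, which reduces to the classical $\max\left\{a,\tfrac{b}{1-b},\tfrac{c}{1-c}\right\}$ when $t=2$. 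Your remark that the hypothesis $b,c<\tfrac{1}{t}$ is precisely what keeps $\delta<1$ after dividing by $1-(t-1)b$ (resp.\ $1-(t-1)c$) correctly identifies the role of that threshold, and all of your case reductions (including the choice of intermediate vertices so that the leftover terms sum to exactly $t\,A(fx,\dots,fx,x)$ for \eqref{1} and $t\,A(fy,\dots,fy,x)$ for \eqref{2}) check out.
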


\begin{theorem}
\label{A} \cite{i} \noindent Let $\left( X,A\right) $ be complete $A$-metric
space and $f:X\rightarrow X$ be an$\ AZ$ mapping. Then $f$ has a unique
fixed point and Picard iteration process $\left\{ x_{n}\right\} $ defined by 
$x_{n+1}=fx_{n}$ converges to a fixed point of $f$.
\end{theorem}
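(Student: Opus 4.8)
The plan is to follow the classical Picard scheme: extract a geometric decay estimate for consecutive iterates, upgrade it to a Cauchy estimate via the $A$-metric triangle inequality, pass to the limit by completeness, and finally check that this limit is the unique fixed point. I would begin by controlling the ``step sizes'' $d_n := A(x_n,x_n,\dots,x_n,x_{n+1})$. Applying inequality (\ref{2}) of Lemma \ref{l1} with the substitution $x=x_n$, $y=x_{n-1}$ gives $fx=x_{n+1}$ and $fy=x_n$, and crucially turns the correction term $t\delta A(fy,\dots,fy,x)$ into $t\delta A(x_n,\dots,x_n,x_n)=0$ by $(A_2)$. Combined with the symmetry $A(x,\dots,x,y)=A(y,\dots,y,x)$ of Lemma \ref{l2}, this yields $d_n\leq \delta\, d_{n-1}$, hence $d_n\leq \delta^n d_0$.

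Next I would estimate $A(x_n,\dots,x_n,x_m)$ for $m>n$. Iterating the triangle-type inequality $A(x,\dots,x,z)\leq (t-1)A(x,\dots,x,y)+A(y,\dots,y,z)$ of Lemma \ref{l3} along the chain $x_n,x_{n+1},\dots,x_m$ produces the telescoping bound
\begin{equation*}
A(x_n,\dots,x_n,x_m)\leq (t-1)\sum_{k=n}^{m-1} d_k \leq (t-1)\,d_0\,\frac{\delta^n}{1-\delta},
\end{equation*}
where I have used $t\geq 2$ so that the single unit-coefficient tail term $d_{m-1}$ is absorbed into the $(t-1)$-weighted sum. The right-hand side tends to $0$ as $n\to\infty$, so $\{x_n\}$ is Cauchy, and by completeness it converges to some $p\in X$.

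To show $fp=p$, I would combine Lemma \ref{l3} with inequality (\ref{1}): write $A(p,\dots,p,fp)\leq (t-1)A(p,\dots,p,x_{n+1})+A(fx_n,\dots,fx_n,fp)$, and bound the last term by $\delta A(x_n,\dots,x_n,p)+t\delta\, d_n$ using (\ref{1}) with $x=x_n$, $y=p$. Since $A(x_n,\dots,x_n,p)\to 0$, $A(p,\dots,p,x_{n+1})\to 0$, and $d_n\to 0$, the constant left-hand side must satisfy $A(p,\dots,p,fp)=0$, i.e.\ $fp=p$ by $(A_2)$. For uniqueness, if $p$ and $q$ are both fixed, inequality (\ref{1}) with $x=p$, $y=q$ again collapses the correction term (since $A(p,\dots,p,p)=0$) and gives $(1-\delta)A(p,\dots,p,q)\leq 0$; as $0\leq\delta<1$, this forces $A(p,\dots,p,q)=0$ and hence $p=q$.

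The main obstacle—really the one genuinely delicate point—is the very first step: one must choose exactly the right orientation of the arguments in Lemma \ref{l1} so that the Zamfirescu correction term $t\delta A(\cdots)$ degenerates to zero. With the wrong choice (for instance, using (\ref{1}) with $x=x_{n-1}$, $y=x_n$) one is left instead with the factor $\delta(1+t)$, which need not be less than one, and no contraction is obtained. Once the clean recursion $d_n\leq\delta\,d_{n-1}$ is secured, everything that follows is routine bookkeeping with the non-symmetric triangle inequality of Lemma \ref{l3} and the convergence of the geometric series.
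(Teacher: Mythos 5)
Theorem \ref{A} is not proved in this paper at all: it is imported verbatim from \cite{i}, so there is no in-paper argument to compare yours against. Judged on its own merits, your proof is correct and is the standard Picard-type argument one expects behind such a statement. The pivotal step is exactly as you say: applying (\ref{2}) of Lemma \ref{l1} with $x=x_n$, $y=x_{n-1}$ makes the correction term $t\delta A(fy,\dots,fy,x)=t\delta A(x_n,\dots,x_n,x_n)=0$ by $(A_2)$, and together with Lemma \ref{l2} this gives the clean recursion $d_n\leq\delta d_{n-1}$; your remark that the opposite orientation only yields the useless factor $\delta(1+t)$ is accurate. The Cauchy estimate via iteration of Lemma \ref{l3} (with the tail term $d_{m-1}$ absorbed using $t\geq 2$), the limit argument showing $A(p,\dots,p,fp)=0$, and the uniqueness argument are all sound; the only cosmetic point is that the paper's Cauchy condition involves arbitrary $n,m$, but the case $m<n$ follows from your case $m>n$ by the symmetry of Lemma \ref{l2}. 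It is worth noting that your orientation trick is precisely the device the present paper does use in the proofs it contains: in Theorem \ref{main1} the author takes $x=u$, $y=x_n$ in (\ref{25}) so that the correction term $t\delta A(fu,fu,\dots,fu,u)$ vanishes at the fixed point, which is the same idea specialized to $u$ rather than to consecutive iterates. So your argument is consistent in spirit with the paper's methods and would serve as a self-contained proof of the cited theorem.
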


Studies in metric spaces are related to the existence of fixed point without
approximating them. The reason behind is the unavailablity of convex
structure in metric spaces. To solve this problem, Takahashi \cite{Ta}
introduced the notion of convex metric spaces and studied the approximation
of fixed points for nonexpansive mappings in this setting. Inspired by this,
Yildirim and Khan \cite{yk} defined convex structure in $G$-metric spaces
and they transformed the Mann iterative process to a convex $G$-metric space
as follows. And they also proved some fixed point theorems deal with
convergence of Mann iteration process for some class of mappings.

\begin{definition}
\label{yk1} \cite{yk} Let $(X,G)$ be a $G$-metric space. A mapping $%
W:X^{2}\times I^{2}\rightarrow X$ is termed as a convex structure on $X$ if $%
G(W(x,y;\lambda ,\beta ),u,v)\leq \lambda G(x,u,v)+\beta G(y,u,v)$ for real
numbers $\lambda $ and $\beta $ in $I=[0,1]$ satisfying $\lambda +\beta =1$
and $x,y,u$ and $v\in X.$

A $G$-metric space $(X,G)$ with a convex structure $W$ is called a convex $G$%
-metric space and denoted as $(X,G,W).$

A nonempty subset $C$ of a convex $G$-metric space $(X,G,W)$ is said to be
convex if $W(x,y;a,b)\in C$ for all $x,y\in C$ and $a,b\in I.$\ 
\end{definition}

\begin{definition}
\label{ykk2} \cite{yk} Let $(X,G,W)$ be convex $G$-metric space with convex
structure $W$ and $f:X\rightarrow X$ be a mapping. Let $\left\{ {\alpha }%
_{n}\right\} $ be a sequence in $[0,1]$ for $n\in 
\mathbb{N}
.$ Then for any given $x_{0}\in X,$ the iterative process defined by the
sequence $\left\{ x_{n}\right\} $ as%
\begin{equation}
x_{n+1}=W\left( x_{n},fx_{n};1-{\alpha }_{n},{\alpha }_{n}\right) ,\ \text{\
\ \ }n\in 
\mathbb{N}
,  \label{m}
\end{equation}%
is called Mann iterative process in the convex metric space $(X,G,W).$
\end{definition}

The iterative approximation of a fixed point for certain classes of mappings
is one of the main tools in the fixed point theory. Many authors (\cite{kd,
gund, AR, Kim, Liu, Modi, 10, 29, yil, yko}) discussed the existence of
fixed points and convergence of different iterative processes for various
mappings in convex metric spaces.

Keeping the above in mind, in this paper, we first define the concept of
convexity in $A$-metric spaces. Then, we use Mann iteration in this newly
defined convex $A$-metric space to prove some convergence results for
approximating fixed points of some classes of mappings. We also discuss
stability result for the Mann iteration process. Results in this paper show
that different iteration methods can be used to approximate fixed points of
different class of mappings in $A$-metric spaces. Our results are just new
in the setting.

Now, we define convex structure in $A$-metric spaces as follows.

\begin{definition}
\label{yy} Let $(X,A)$ be a $A$-metric space and $I=\left[ 0,1\right] $. A
mapping $W:X^{t}\times I^{t}\rightarrow X$ is termed as a convex structure
on $X$ if 
\begin{eqnarray}
&&A(u_{1},u_{2},...,u_{t-1},W(x_{1},x_{2},...,x_{t-1},x_{t};a_{1},a_{2},...,a_{t}))
\label{m9} \\
&\leq
&a_{1}A(u_{1},u_{2},...,u_{t-1},x_{1})+a_{2}A(u_{1},u_{2},...,u_{t-1},x_{2})
\notag \\
&&+...+a_{t}A(u_{1},u_{2},...,u_{t-1},x_{t})  \notag \\
&=&\sum_{i=1}^{t}a_{i}A(u_{1},u_{2},...,u_{t-1},x_{i})  \notag
\end{eqnarray}%
for real numbers $a_{1},a_{2},...,a_{t}$ in $I=[0,1]$ satisfying $%
\sum_{i=1}^{t}a_{i}=1$ and $u_{i}$, $x_{i}\in X$ for all $i=1,2,...,t$.

An $A$-metric space $(X,A)$ with a convex structure $W$ is called a convex $%
A $-metric space and denoted as $(X,A,W).$

A nonempty subset $C$ of a convex $A$-metric space $(X,A,W)$ is said to be
convex if

$W(x_{1},x_{2},...,x_{t-1},x_{t};a_{1},a_{2},...,a_{t})\in C$ for all $%
x_{i}\in C$ and $a_{i}\in I$, $i=1,2,...,t$.
\end{definition}

Next, we transform the Mann iteration process to a convex $A$-metric space
as follows.

\begin{definition}
\label{yk2} Let $(X,A,W)$ be convex $A$-metric space with convex structure $%
W $ and $f:X\rightarrow X$ be a mapping. Let $\left\{ {\alpha }%
_{i}^{n}\right\} $ be sequences in $[0,1]$ for all $i=1,2,...,t$ and $n\in 
\mathbb{N}
.$ Then for any given $x_{0}\in X,$ the iteration process defined by the
sequence $\left\{ x_{n}\right\} $ as%
\begin{equation}
x_{n+1}=W\left( x_{n},x_{n},...,x_{n},fx_{n};{\alpha }_{1}^{n},{\alpha }%
_{2}^{n},...,{\alpha }_{t}^{n}\right) ,  \label{m7}
\end{equation}%
is called Mann iteration process in the convex metric space $(X,A,W).$
\end{definition}

It follows from the structure of convex $A$-metric space that%
\begin{eqnarray}
A(x_{n+1},u_{1},u_{2},...,u_{t-1}) &=&A(W\left( x_{n},x_{n},...,x_{n},fx_{n};%
{\alpha }_{1}^{n},{\alpha }_{2}^{n},...,{\alpha }_{t}^{n}\right)
,u_{1},u_{2},...,u_{t-1})  \label{m8} \\
&\leq &{\alpha }_{1}^{n}A(x_{n},u_{1},u_{2},...,u_{t-1})+{\alpha }%
_{2}^{n}A(x_{n},u_{1},u_{2},...,u_{t-1})  \notag \\
&&+...+{\alpha }_{t}^{n}A(fx_{n},u_{1},u_{2},...,u_{t-1}).  \notag
\end{eqnarray}

If we take $t=2$ in $(\ref{m7})$ and $(\ref{m8})$, this structures reduce to 
$(\ref{m})$ and $(\ref{m9})$.

The following Lemma shall be used in the proof of the stability result.

\begin{lemma}
\label{mm} \cite{ber} If $\delta $ is a real number such that $0\leq \delta
<1$ and $\left\{ {\varepsilon }_{n}\right\} $ is a sequence of positive
numbers such that ${\mathop{\mathrm{lim}}_{n\rightarrow \infty }{\varepsilon 
}_{n}=0\ }$, then for any sequence of positive numbers $\left\{
u_{n}\right\} $ satisfying 
\begin{equation*}
u_{n+1}\leq \delta u_{n}+{\varepsilon }_{n}\ ,\ n=0,1,\dots
\end{equation*}%
we have 
\begin{equation*}
{\mathop{\mathrm{lim}}_{n\rightarrow \infty }u_{n}=0\ }.
\end{equation*}
\end{lemma}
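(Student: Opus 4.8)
The plan is to unfold the one-step recursion into a closed-form bound and then show that each of the resulting pieces vanishes. Iterating the hypothesis $u_{n+1}\leq \delta u_n+\varepsilon_n$ starting from $u_0$, a routine induction on $n$ yields
\begin{equation*}
u_{n+1}\leq \delta^{n+1}u_0+\sum_{k=0}^{n}\delta^{n-k}\varepsilon_k,\qquad n=0,1,\dots
\end{equation*}
so it suffices to prove that the right-hand side tends to $0$ as $n\to\infty$.

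The first summand is immediate: since $0\leq\delta<1$ we have $\delta^{n+1}\to 0$, hence $\delta^{n+1}u_0\to 0$. The work lies entirely in controlling the convolution term $S_n:=\sum_{k=0}^{n}\delta^{n-k}\varepsilon_k$, which is where I expect the main difficulty; the two competing effects are that $\varepsilon_k$ is small only for large $k$, while the geometric weight $\delta^{n-k}$ is small only when $n-k$ is large, so neither factor can be controlled uniformly across the whole sum at once.

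To reconcile these I would use an $\eta/2$ splitting. Fix $\eta>0$. Since $\varepsilon_k\to 0$, choose $N$ so that $\varepsilon_k<\eta(1-\delta)/2$ for all $k\geq N$. For $n\geq N$ split $S_n$ at the index $N$:
\begin{equation*}
S_n=\sum_{k=0}^{N-1}\delta^{n-k}\varepsilon_k+\sum_{k=N}^{n}\delta^{n-k}\varepsilon_k.
\end{equation*}
The tail sum is handled by the geometric series: with $j=n-k$ one has $\sum_{k=N}^{n}\delta^{n-k}\leq\sum_{j=0}^{\infty}\delta^{j}=\tfrac{1}{1-\delta}$, so $\sum_{k=N}^{n}\delta^{n-k}\varepsilon_k<\tfrac{\eta(1-\delta)}{2}\cdot\tfrac{1}{1-\delta}=\tfrac{\eta}{2}$. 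The head sum has the fixed finite number $N$ of terms, and for each fixed $k<N$ the factor $\delta^{n-k}\to 0$ as $n\to\infty$; hence there is $N'\geq N$ with $\sum_{k=0}^{N-1}\delta^{n-k}\varepsilon_k<\eta/2$ for all $n\geq N'$. Combining the two estimates gives $S_n<\eta$ for every $n\geq N'$, so $S_n\to 0$. Together with $\delta^{n+1}u_0\to 0$ this forces $u_{n+1}\to 0$, and therefore $\lim_{n\to\infty}u_n=0$, as claimed.
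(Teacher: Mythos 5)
Your proof is correct: the induction to the closed-form bound $u_{n+1}\leq \delta^{n+1}u_0+\sum_{k=0}^{n}\delta^{n-k}\varepsilon_k$ is valid, the split of the convolution sum at a fixed index $N$ handles the two competing effects properly (geometric tail bounded by $\frac{1}{1-\delta}$ times the small $\varepsilon$-bound, finitely many head terms each killed by $\delta^{n-k}\to 0$), and positivity of $u_n$ lets you conclude by squeezing. Note, however, that the paper itself offers no proof of this lemma at all; it is imported verbatim from the cited reference of Berinde, so there is no in-paper argument to compare against. Your unfold-and-split argument is the standard, self-contained proof of this classical fact (essentially a special case of the Toeplitz/Cauchy convolution lemma), and it is exactly the kind of reasoning the cited source relies on, so nothing further is needed.
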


\section{\textbf{Main Results}}

\textbf{2.1 Convergence Result: }In this section, we prove the Mann
iteration process converges to fixed point of Zamfirescu mappings in
complete convex metric space $(X,A,W)$.

\begin{theorem}
\label{main1} Let $(X,A,W)$ be a complete convex $A$-metric space with a
convex structure $W$ and, $f:X\rightarrow X$ be an $AZ$ mapping$.$ Let $%
\left\{ x_{n}\right\} $ be defined iteratively by $(\ref{m7})$ and $x_{0}\in
X,$ with $\left\{ {\alpha }_{t}^{n}\right\} \subset \lbrack
0,1],\tsum\limits_{i=1}^{t}a_{i}=1$ satisfying $\tsum\limits_{n=0}^{\infty }{%
\alpha }_{t}^{n}=\infty $ for all $n\in 
\mathbb{N}
$ and $i=1,2,...,t.$ Then $\left\{ x_{n}\right\} $ converges to a unique
fixed point of $f.$

\begin{proof}
From Theorem \ref{A}, we know that an$\ AZ$ mapping has a unique fixed point
in $X$. Call it $u$ and consider $x_{i}\in X$, $i=1,2,...,t$.

At least one of $\left( AZ_{1}\right) $, $\left( AZ_{2}\right) $ and $\left(
AZ_{3}\right) $ is satisfied. If $\left( AZ_{1}\right) ,\left( AZ_{2}\right) 
$ or $\left( AZ_{3}\right) $ holds, we know that the following inequality
from Lemma \ref{l1}%
\begin{equation}
A\left( fx,fx,\dots ,fx,fy\right) \leq \delta A\left( x,x,\dots ,x,y\right)
+t\delta A(fx,fx,\dots ,fx,x)  \label{25}
\end{equation}%
for all $x,y\in X$.

Let $\left\{ x_{n}\right\} $ be the Mann iteration process $(\ref{m7})$,
with $x_{0}\in X$ arbitrary. Then 
\begin{eqnarray*}
A(u,u,...,u,x_{n+1}) &\leq &A(u,u,...,u,W\left( x_{n},x_{n},...,x_{n},fx_{n};%
{\alpha }_{1}^{n},{\alpha }_{2}^{n},...,{\alpha }_{t}^{n}\right) ) \\
&\leq &{\alpha }_{1}^{n}A(u,u,...,u,x_{n})+{\alpha }%
_{2}^{n}A(u,u,...,u,x_{n}) \\
&&+...+{\alpha }_{t}^{n}A(u,u,...,u,fx_{n}) \\
&=&\left( 1-{\alpha }_{t}^{n}\right) A(u,u,...,u,x_{n})+{\alpha }%
_{t}^{n}A(u,u,...,u,fx_{n}).
\end{eqnarray*}%
Take $x=u$ and $y=x_{n}$ in (\ref{25}) to obtain%
\begin{eqnarray}
A(u,u,...,u,fx_{n}) &=&A(fu,fu,...,fu,fx_{n})  \label{5} \\
&\leq &\delta A(u,u,...,u,x_{n})+t\delta A(fu,fu,\dots ,fu,u)  \notag \\
&=&\delta A(u,u,...,u,x_{n})  \notag
\end{eqnarray}%
which together with (\ref{5}) yields 
\begin{eqnarray}
A(u,u,...,u,x_{n+1}) &\leq &\left( 1-{\alpha }_{t}^{n}\right)
A(u,u,...,u,x_{n})+{\alpha }_{t}^{n}\delta A(u,u,...,u,x_{n})  \label{6} \\
&=&\left[ 1-\left( 1-\delta \right) {\alpha }_{t}^{n}\right]
A(u,u,...,u,x_{n}).  \notag
\end{eqnarray}%
Inductively we get 
\begin{eqnarray}
A(u,u,...,u,x_{n+1}) &\leq &\left[ 1-\left( 1-\delta \right) {\alpha }%
_{t}^{n}\right] A(u,u,...,u,x_{n})  \label{7} \\
&\leq &\left[ 1-\left( 1-\delta \right) {\alpha }_{t}^{n}\right] \left[
1-\left( 1-\delta \right) {\alpha }_{t}^{n-1}\right] A(u,u,...,u,x_{n-1}) 
\notag \\
&&\vdots  \notag \\
&\leq &\tprod\limits_{k=0}^{n}\left[ 1-\left( 1-\delta \right) {\alpha }%
_{t}^{k}\right] A(u,u,...,u,x_{0})  \notag
\end{eqnarray}%
As $0\leq \delta <1,\left\{ {\alpha }_{t}^{k}\right\} \subset \lbrack 0,1]$
and $\tsum\limits_{k=0}^{\infty }{\alpha }_{t}^{k}=\infty $, we have%
\begin{equation*}
\lim_{n\rightarrow \infty }\tprod\limits_{k=0}^{n}\left[ 1-\left( 1-\delta
\right) {\alpha }_{t}^{k}\right] =0,
\end{equation*}%
which by (\ref{7}) implies%
\begin{equation*}
\lim_{n\rightarrow \infty }A(u,u,...,u,x_{n+1})=\lim_{n\rightarrow \infty
}A(x_{n+1},x_{n+1},...,x_{n+1},u)=0.
\end{equation*}%
Hence the sequence $\left\{ x_{n}\right\} $ defined iteratively by $(\ref{m7}%
)$ converges to the fixed point of $f$.
\end{proof}
\end{theorem}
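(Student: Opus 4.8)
The plan is to first invoke Theorem \ref{A} to produce the unique fixed point $u$ of the $AZ$ mapping $f$, and then to show that the scalar sequence $d_n := A(u,u,\dots,u,x_n)$ obeys a one-step contraction sharp enough to drive it to zero. First I would expand $d_{n+1}=A(u,\dots,u,x_{n+1})$ by substituting the Mann update $(\ref{m7})$ into the convex-structure inequality $(\ref{m8})$ with all auxiliary points set to $u$, i.e. $u_1=\dots=u_{t-1}=u$. Because the first $t-1$ arguments of $W$ are all equal to $x_n$ and the coefficients satisfy $\sum_{i=1}^t \alpha_i^n=1$, the first $t-1$ summands collapse into a single term with coefficient $1-\alpha_t^n$, giving
\[
d_{n+1}\le \left(1-\alpha_t^n\right)A(u,\dots,u,x_n)+\alpha_t^n\,A(u,\dots,u,fx_n).
\]

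Next I would estimate $A(u,\dots,u,fx_n)$. Since $u=fu$, I can rewrite this as $A(fu,\dots,fu,fx_n)$ and apply inequality $(\ref{1})$ of Lemma \ref{l1} with $x=u$ and $y=x_n$. The key simplification is that the additive error term $t\delta\,A(fu,\dots,fu,u)=t\delta\,A(u,\dots,u,u)$ vanishes by axiom $(A_2)$, leaving the clean bound $A(u,\dots,u,fx_n)\le \delta\,d_n$. Substituting this back yields the one-step estimate $d_{n+1}\le\left[1-(1-\delta)\alpha_t^n\right]d_n$.

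Finally, a routine induction unwinds this into the product bound $d_{n+1}\le\prod_{k=0}^{n}\left[1-(1-\delta)\alpha_t^k\right]d_0$. To conclude, I would use the elementary inequality $1-x\le e^{-x}$ valid for $x\ge 0$, so that the product is dominated by $\exp\!\left(-(1-\delta)\sum_{k=0}^{n}\alpha_t^k\right)$. Because $0\le\delta<1$ and the hypothesis gives $\sum_{k=0}^{\infty}\alpha_t^k=\infty$, this exponential tends to $0$, hence $d_n\to 0$. By the symmetry in Lemma \ref{l2} we have $d_n=A(x_n,\dots,x_n,u)$, which is precisely the statement that $\{x_n\}$ converges to $u$ in the sense of the convergence definition; uniqueness of $u$ comes for free from Theorem \ref{A}. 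The only genuinely delicate step is observing that the nonlinear error term in Lemma \ref{l1} disappears at the fixed point, turning the two-parameter Zamfirescu estimate into a genuine contraction toward $u$; the remaining work is bookkeeping on the convex combination and the standard product-to-zero argument.
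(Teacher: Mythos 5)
Your proposal is correct and follows essentially the same route as the paper's own proof: fixed point from Theorem \ref{A}, the convex-structure estimate collapsing the first $t-1$ terms to give $d_{n+1}\le(1-\alpha_t^n)d_n+\alpha_t^n A(u,\dots,u,fx_n)$, Lemma \ref{l1} at the fixed point (where the error term vanishes) to get $A(u,\dots,u,fx_n)\le\delta d_n$, and the inductive product bound. The only difference is that you explicitly justify $\prod_{k=0}^{n}\bigl[1-(1-\delta)\alpha_t^k\bigr]\to 0$ via $1-x\le e^{-x}$ and the divergence of $\sum\alpha_t^k$, a standard step the paper asserts without proof; this is a welcome addition rather than a different argument.
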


\textbf{2.2 Stability Result: }Now, we will give stability result for the
Mann iteration $(\ref{m7})$ in complete convex $A$-metric space.

\begin{definition}
Let $(X,A,W)$ be a convex $A$-metric space with a convex structure $W$ and, $%
f:X\rightarrow X$ be a mapping, $x_{0}\in X$ and let us assume that the
iteration process\ $(\ref{m7})$, that is, the sequence $\left\{
x_{n}\right\} $ defined by $(\ref{m7})$, converges to a fixed point $u$ of $%
f $.

Let $\left\{ y_{n}\right\} $ be an arbitrary sequence in $X$ and set%
\begin{equation*}
\epsilon _{n}=A\left( y_{n+1},y_{n+1},...,y_{n+1},g\left( f,y_{n}\right)
\right) \text{ for }n=0,1,2,...
\end{equation*}%
where $g\left( f,y_{n}\right) =W\left( y_{n},y_{n},...,y_{n},fy_{n};{\alpha }%
_{1}^{n},{\alpha }_{2}^{n},...,{\alpha }_{t}^{n}\right) $ and $\left\{ {%
\alpha }_{i}^{n}\right\} $ are real sequences in $[0,1]$ for $i=1,2,...,t$.

We say that the Mann iteration process $(\ref{m7})$ is $f$-stable or stable
with respect to $f$ if and only if%
\begin{equation*}
\lim_{n\rightarrow \infty }\epsilon _{n}=0\Longleftrightarrow
\lim_{n\rightarrow \infty }y_{n}=u.
\end{equation*}
\end{definition}

\begin{theorem}
Let $(X,A,W)$ be a complete convex $A$-metric space with a convex structure $%
W$ and, $f:X\rightarrow X$ be an $AZ$ mapping$.$Let $\left\{ x_{n}\right\} $
be defined iteratively by $(\ref{m7})$ and $x_{0}\in X,$ with $\left\{ {%
\alpha }_{t}^{n}\right\} \subset \lbrack 0,1],\tsum\limits_{i=1}^{t}a_{i}=1$
satisfying $0<\alpha \leq \alpha _{n}$ and $\tsum\limits_{n=0}^{\infty }{%
\alpha }_{t}^{n}=\infty $ for all $n\in 
\mathbb{N}
$ and $i=1,2,...,t.$ Then the Mann iteration process $(\ref{m7})$ is $f$%
-stable.
\end{theorem}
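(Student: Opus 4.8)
The plan is to prove the equivalence $\lim_{n\to\infty}\epsilon_n=0 \Longleftrightarrow \lim_{n\to\infty}y_n=u$ by establishing each direction separately, using Lemma \ref{mm} (the Berinde-type recursion lemma) as the engine and the key inequality (\ref{25}) from Lemma \ref{l1} together with the convex-structure bound (\ref{m8}) as the structural ingredients. Throughout I would measure distances via the quantity $A(u,u,\dots,u,\cdot)$, exactly as in the convergence proof of Theorem \ref{main1}, since the fixed point $u$ satisfies $fu=u$ and hence $A(fu,fu,\dots,fu,u)=0$.

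For the forward direction, assume $\lim_{n\to\infty}\epsilon_n=0$ and set $u_n:=A(u,u,\dots,u,y_n)$. First I would apply the rectangle/triangle-type estimate of Lemma \ref{l3} to split
\begin{equation*}
A(u,u,\dots,u,y_{n+1})\leq (t-1)A(u,u,\dots,u,g(f,y_n))+A(y_{n+1},y_{n+1},\dots,y_{n+1},g(f,y_n)),
\end{equation*}
where the second term is precisely $\epsilon_n$. Next I would bound $A(u,u,\dots,u,g(f,y_n))$ by expanding $g(f,y_n)$ through the convex-structure inequality (\ref{m8}) with all the first $t-1$ slots equal to $y_n$ and the last equal to $fy_n$, giving
\begin{equation*}
A(u,u,\dots,u,g(f,y_n))\leq (1-\alpha_t^n)A(u,u,\dots,u,y_n)+\alpha_t^n A(u,u,\dots,u,fy_n).
\end{equation*}
Then, taking $x=u$, $y=y_n$ in (\ref{25}) and using $A(fu,\dots,fu,u)=0$ exactly as in (\ref{5}), I obtain $A(u,u,\dots,u,fy_n)\leq \delta A(u,u,\dots,u,y_n)$. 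Combining these yields a recursion of the form $u_{n+1}\leq \bigl[1-(1-\delta)\alpha_t^n\bigr]u_n + (t-1)\cdot(\text{same bracket})\cdot u_n$-type contraction plus the error $\epsilon_n$; after absorbing constants this reduces to $u_{n+1}\leq \theta\, u_n+\epsilon_n$ for a suitable $0\le\theta<1$, at which point Lemma \ref{mm} forces $u_n\to 0$, i.e. $y_n\to u$.

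For the converse, assume $y_n\to u$ and show $\epsilon_n\to 0$. Here I would estimate $\epsilon_n=A(y_{n+1},\dots,y_{n+1},g(f,y_n))$ by inserting $u$ as an intermediate point via Lemma \ref{l3}:
\begin{equation*}
\epsilon_n\leq (t-1)A(y_{n+1},\dots,y_{n+1},u)+A(g(f,y_n),g(f,y_n),\dots,g(f,y_n),u),
\end{equation*}
using Lemma \ref{l2} to freely swap the roles of the repeated and the single argument. The first term tends to $0$ since $y_{n+1}\to u$, and the second term is controlled by the same convex-structure-plus-contraction chain used above, which shows $A(u,u,\dots,u,g(f,y_n))\leq\bigl[1-(1-\delta)\alpha_t^n\bigr]A(u,u,\dots,u,y_n)\to 0$. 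Hence $\epsilon_n\to 0$.

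The main obstacle I anticipate is bookkeeping the factor $t-1$ that the $A$-metric triangle inequality (Lemma \ref{l3}) introduces: in the ordinary metric case ($t=2$) the constant is $1$ and the recursion is transparently a contraction, but for general $t$ one must verify that the effective multiplier $\theta$ in $u_{n+1}\le\theta u_n+\epsilon_n$ genuinely stays below $1$. This is exactly why the hypothesis $0<\alpha\le\alpha_n$ (a uniform lower bound on the step parameter) is imposed rather than the weaker $\sum\alpha_t^n=\infty$ used for mere convergence; it guarantees $1-(1-\delta)\alpha_t^n\le 1-(1-\delta)\alpha<1$ uniformly, so that even after multiplication by constants depending on $t$ and $\delta$ the bracket can be dominated by a fixed $\theta<1$. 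Care is also needed that the three $AZ$ sub-cases all collapse to the single usable inequality (\ref{25}), but this is already guaranteed by Lemma \ref{l1}, so no separate case analysis is required.
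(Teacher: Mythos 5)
Your converse direction ($y_n\to u \Rightarrow \epsilon_n\to 0$) is correct and coincides with the paper's argument: there the factor $t-1$ multiplies $A(y_{n+1},\dots,y_{n+1},u)$, which tends to $0$ by hypothesis, so it is harmless. The genuine gap is in your forward direction, and it comes from applying Lemma \ref{l3} in the wrong orientation. You split
\begin{equation*}
A(u,\dots,u,y_{n+1})\leq (t-1)A(u,\dots,u,g(f,y_n))+A(y_{n+1},\dots,y_{n+1},g(f,y_n)),
\end{equation*}
which puts the factor $t-1$ on the \emph{contraction} term rather than on $\epsilon_n$. After the convex-structure and $AZ$ estimates this yields $u_{n+1}\leq (t-1)\bigl[1-(1-\delta)\alpha_t^n\bigr]u_n+\epsilon_n$, and the multiplier $(t-1)\bigl[1-(1-\delta)\alpha_t^n\bigr]$ is in general \emph{not} below $1$ when $t\geq 3$: since $\alpha_t^n\leq 1$, the bracket satisfies $1-(1-\delta)\alpha_t^n\geq \delta$, so the multiplier is at least $(t-1)\delta$, and nothing in the hypotheses bounds $(t-1)\delta$ or $(t-1)\bigl[1-(1-\delta)\alpha\bigr]$ by $1$ (e.g. $t=4$, $\delta=\tfrac12$, $\alpha_t^n=\tfrac12$ gives multiplier $\tfrac94$). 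Your claim that the constants can be ``absorbed'' into some $\theta<1$ thanks to $0<\alpha\leq\alpha_n$ is therefore false; that hypothesis only ensures $1-(1-\delta)\alpha_t^n\leq 1-(1-\delta)\alpha<1$, which is needed so that Lemma \ref{mm} applies with a \emph{fixed} constant, not to defeat an extra factor of $t-1$.

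The fix is exactly the paper's decomposition: use Lemma \ref{l2} to work with $A(y_{n+1},\dots,y_{n+1},u)$ and apply Lemma \ref{l3} with $g(f,y_n)$ inserted so that the factor $t-1$ lands on the error term, namely
\begin{equation*}
A(y_{n+1},\dots,y_{n+1},u)\leq (t-1)\,\epsilon_n+A\bigl(u,\dots,u,g(f,y_n)\bigr)\leq \bigl[1-(1-\delta)\alpha_t^n\bigr]A(u,\dots,u,y_n)+(t-1)\epsilon_n.
\end{equation*}
Then the coefficient of $u_n$ is at most $1-(1-\delta)\alpha<1$ uniformly, the error $(t-1)\epsilon_n$ still tends to $0$, and Lemma \ref{mm} closes the argument. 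In short: in both directions of a stability proof in an $A$-metric space you must arrange the rectangle inequality so that the constant $t-1$ multiplies the quantity already known to vanish; you did this in the converse direction but not in the forward one.
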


\begin{proof}
From Theorem \ref{A}, we know that $f$ has a unique fixed point. Suppose
that $u\in X$. From from Lemma \ref{l1}, we also know that%
\begin{equation}
A\left( fx,fx,\dots ,fx,fy\right) \leq \delta A\left( x,x,\dots ,x,y\right)
+t\delta A(fx,fx,\dots ,fx,x).  \label{26}
\end{equation}

Let $\left\{ y_{n}\right\} \subset X$ and $\epsilon _{n}=A\left(
y_{n+1},y_{n+1},...,y_{n+1},g\left( f,y_{n}\right) \right) $. Assume that $%
\lim_{n\rightarrow \infty }\epsilon _{n}=0$. Then, we will show that $%
\lim_{n\rightarrow \infty }y_{n}=u$. From (\ref{26}) and triangle
inequality, we get%
\begin{eqnarray}
&&A\left( y_{n+1},y_{n+1},\dots ,y_{n+1},,u\right)  \label{29} \\
&\leq &\left( t-1\right) A\left( y_{n+1},y_{n+1},\dots ,y_{n+1},,g\left(
f,y_{n}\right) \right)  \notag \\
&&+A\left( g\left( f,y_{n}\right) ,g\left( f,y_{n}\right) ,\dots ,g\left(
f,y_{n}\right) ,u\right)  \notag \\
&=&\left( t-1\right) A\left( y_{n+1},y_{n+1},\dots ,y_{n+1},,g\left(
f,y_{n}\right) \right)  \notag \\
&&+A\left( u,u,\dots ,u,g\left( f,y_{n}\right) \right)  \notag \\
&\leq &\left( t-1\right) {\varepsilon }_{n}+A\left( u,u,\dots ,u,g\left(
f,y_{n}\right) \right)  \notag \\
&=&\left( t-1\right) {\varepsilon }_{n}+A\left( u,u,\dots ,u,W\left(
y_{n},y_{n},\dots ,y_{n},fy_{n};{\alpha }_{1}^{n},{\alpha }_{2}^{n},\dots ,{%
\alpha }_{t}^{n}\right) \right)  \notag \\
&\leq &\left( t-1\right) {\varepsilon }_{n}+{\alpha }_{1}^{n}A\left(
u,u,\dots ,u,y_{n}\right) +{\alpha }_{2}^{n}A\left( u,u,\dots ,u,y_{n}\right)
\notag \\
&&+\dots +{\alpha }_{t}^{n}A\left( u,u,\dots ,u,{fy}_{n}\right)  \notag \\
&=&\left( t-1\right) {\varepsilon }_{n}+\left( 1-{\alpha }_{t}^{n}\right)
A\left( u,u,\dots ,u,y_{n}\right) +{\alpha }_{t}^{n}A\left( u,u,\dots ,u,{fy}%
_{n}\right)  \notag \\
&\leq &\left( t-1\right) {\varepsilon }_{n}+\left( 1-{\alpha }%
_{t}^{n}\right) A\left( u,u,\dots ,u,y_{n}\right)  \notag \\
&&+{\alpha }_{t}^{n}\left[ \delta A\left( u,u,\dots ,u,y_{n}\right) +t\delta
A\left( fu,fu,\dots ,fu,u\right) \right]  \notag \\
&=&\left[ 1-\left( 1-\delta \right) {\alpha }_{t}^{n}\right] A\left(
y_{n},y_{n},\dots ,y_{n},u\right) +\left( t-1\right) {\varepsilon }_{n}. 
\notag
\end{eqnarray}

Since $0\leq 1-\left( 1-\delta \right) {\alpha }_{t}^{n}<1-\left( 1-\delta
\right) \alpha <1$, using Lemma \ref{mm} in (\ref{29}) yields%
\begin{equation*}
{\mathop{\mathrm{lim}}_{n\rightarrow \infty }A\left( y_{n},y_{n},\dots
,y_{n},u\right) =0,\ }
\end{equation*}%
that is,%
\begin{equation*}
{\mathop{\mathrm{lim}}_{n\rightarrow \infty }y_{n}=u\ }.
\end{equation*}%
Conversely, let ${\mathop{\mathrm{lim}}_{n\rightarrow \infty }y_{n}=u\ }$.
Then,%
\begin{eqnarray*}
{\varepsilon }_{n} &=&A\left( y_{n+1},y_{n+1},\dots ,y_{n+1},,g\left(
f,y_{n}\right) \right) \\
&\leq &\left( t-1\right) A\left( y_{n+1},y_{n+1},\dots ,y_{n+1},,u\right)
+A\left( u,u,\dots ,u,g\left( f,y_{n}\right) \right) \\
&=&\left( t-1\right) A\left( y_{n+1},y_{n+1},\dots ,y_{n+1},,u\right) \\
&&+A\left( u,u,\dots ,u,W\left( y_{n},y_{n},\dots ,y_{n},fy_{n};{\alpha }%
_{1}^{n},{\alpha }_{2}^{n},\dots ,{\alpha }_{t}^{n}\right) \right) \\
&\leq &\left( t-1\right) A\left( y_{n+1},y_{n+1},\dots ,y_{n+1},,u\right) +{%
\alpha }_{1}^{n}A\left( u,u,\dots ,u,y_{n}\right) \\
&&+{\alpha }_{2}^{n}A\left( u,u,\dots ,u,y_{n}\right) +\dots +{\alpha }%
_{t}^{n}A\left( u,u,\dots ,u,{fy}_{n}\right) \\
&=&\left( t-1\right) A\left( y_{n+1},y_{n+1},\dots ,y_{n+1},,u\right)
+\left( 1-{\alpha }_{t}^{n}\right) A\left( u,u,\dots ,u,y_{n}\right) \\
&&{+\alpha }_{t}^{n}A\left( u,u,\dots ,u,{fy}_{n}\right) \\
&\leq &\left( t-1\right) A\left( y_{n+1},y_{n+1},\dots ,y_{n+1},,u\right)
+\left( 1-{\alpha }_{t}^{n}\right) A\left( u,u,\dots ,u,y_{n}\right) \\
&&+{\alpha }_{t}^{n}\left[ \delta A\left( u,u,\dots ,u,y_{n}\right) +t\delta
A\left( fu,fu,\dots ,fu,u\right) \right] \\
&\leq &\left( t-1\right) A\left( y_{n+1},y_{n+1},\dots ,y_{n+1},,u\right)
+\left( 1-{\alpha }_{t}^{n}\right) A\left( u,u,\dots ,u,y_{n}\right) \\
&&+{\alpha }_{t}^{n}\delta A\left( u,u,\dots ,u,y_{n}\right)
\end{eqnarray*}

Letting $n\rightarrow \infty $ in the above inequality, we have $%
\lim_{n\rightarrow \infty }\epsilon _{n}=0$.
\end{proof}

\end{document}